\documentclass[11pt,a4paper]{article}
\usepackage[latin1]{inputenc}
\usepackage{amsmath}
\usepackage{amsthm}
\usepackage{amsfonts}
\usepackage{amsfonts,amsthm,latexsym,amsmath,amssymb,amscd,epsfig,psfrag,enumerate}
\usepackage{graphics,graphicx, bezier, float, color, hyperref}
\usepackage{amssymb,url}
\usepackage{multienum}
\usepackage[table]{xcolor}
\usepackage{multicol,multirow}
\usepackage{graphicx}
\usepackage{fancyvrb}
\usepackage{parskip}
\usepackage[toc,page]{appendix}
\usepackage[top=2.7cm, bottom=2.7cm, left=1.5cm, right=1.5cm]{geometry}
\usepackage{xcolor}

\usepackage{blkarray}
\newtheorem{theorem}{Theorem}[section]
\newtheorem{lemma}{Lemma}[section]

\usepackage[none]{hyphenat}[section]
\newtheorem{definition}{Definition}[section]

\numberwithin{equation}{section}
\numberwithin{table}{section}
\numberwithin{figure}{section}

\title{On Wagstaff primes in the $k$-Lucas number sequence}
\author{Herbert Batte$^{1,*} $ }
\date{}

\begin{document}
\maketitle
\abstract{A Wagstaff prime is a prime number of the form $(2^{\mathfrak{p}}+1)/3$, where $\mathfrak{p}$ is an odd prime. Let $(L_n^{(k)})_{n\geq 2-k}$ be the $k$-Lucas number sequence defined by the recurrence relation
	$ L_n^{(k)} = L_{n-1}^{(k)} + \cdots + L_{n-k}^{(k)}$, for all $n \ge 2$, with initial terms \( L_0^{(k)} = 2 \) and \( L_1^{(k)} = 1 \) for all \( k \ge 2 \), and \( L_{2-k}^{(k)} = \cdots = L_{-1}^{(k)} = 0 \) for \( k \ge 3 \). In this paper, we show that the only solutions to the Diophantine equation $L_n^{(k)} = (2^{\mathfrak{p}}+1)/3$ are $(n,k,\mathfrak{p})\in\{(5,2,5),(6,4,7)\}\cup \{(2,k,3):k\ge 2\}$. We use linear forms in logarithms and the LLL reduction method to prove our result.} 

{\bf Keywords and phrases}: Wagstaff prime; $k$-generalized Lucas numbers; linear forms in logarithms; LLL reduction method.

{\bf 2020 Mathematics Subject Classification}: 11A41, 11B39, 11D61, 11D45.

\thanks{$ ^{*} $ Corresponding author}

\section{Introduction}\label{intro}
\subsection{Background}\label{sec:1.1}
For an integer $k \ge 2$, the sequence of $k$-generalized Lucas numbers $(L_n^{(k)})_{n \ge 2-k}$ is defined by the recurrence relation
\begin{equation*}
	L_n^{(k)} = L_{n-1}^{(k)} + L_{n-2}^{(k)} + \cdots + L_{n-k}^{(k)}, \quad \text{for all} \ n \ge 2,
\end{equation*}
with the initial conditions $L_0^{(k)} = 2$, $L_1^{(k)} = 1$, and $L_j^{(k)} = 0$ for $2-k \le j \le -1$. When $k=2$, we get the classical Lucas numbers $2, 1, 3, 4, 7, 11, \dots$. 

In Number theory, a Wagstaff prime is a prime of the form 
\begin{equation*}
	W = \frac{2^{\mathfrak{p}} + 1}{3},
\end{equation*}
where $\mathfrak{p}$ is an odd prime. These primes are named after the mathematician Samuel S. Wagstaff Jr. and are related to Mersenne primes and perfect numbers. Some examples of Wagstaff primes include 3, 11, 43, and 683 (see sequence A000979 in \cite{OEIS}).

The search for special types of prime numbers within linear recurrence sequences is a popular topic. Recently, Rezaiguia et al. \cite{REZ} studied the Diophantine equation $F_n^{(k)} = (2^{\mathfrak{p}}+1)/3$, where $F_n^{(k)}$ is the $k$-generalized Fibonacci sequence. They proved that 3 is the only Wagstaff prime that appears in those sequences. 
Motivated by their work, we ask the same question for the $k$-Lucas number sequence. Since $k$-Lucas numbers grow at a similar rate to $k$-Fibonacci numbers but start with different initial values, it is natural to check if they contain more Wagstaff primes. In this paper, we solve the Diophantine equation
\begin{equation}\label{eq:main}
	L_n^{(k)} = \frac{2^{\mathfrak{p}} + 1}{3},
\end{equation}
in non-negative integers $n, k, \mathfrak{p}$ where $\mathfrak{p}$ is an odd prime and $k \ge 2$. Since $L_0^{(k)}=2$ and $L_1^{(k)}=1$ for all $k\ge 2$, and these are not Wagstaff primes, we may assume $n\ge 2$ throughout the paper.

We prove the following result.
\subsection{Main Result}\label{sec:1.2}
\begin{theorem}\label{thm1.1} 
	Let $(L_n^{(k)})_{n\geq 2-k}$ be the sequence of $k$-generalized Lucas numbers. Then, the only integer solutions $(n,k,\mathfrak{p})$ to Eq. \eqref{eq:main} with $n\ge 0$, $k\ge 2$ and prime $\mathfrak{p}\ge 3$ are
	\begin{align*}
		(n,k,\mathfrak{p})\in\{(5,2,5),(6,4,7)\}\cup \{(2,k,3):k\ge 2\}.
	\end{align*}
	Specifically, $L_2^{(k)}=3=(2^{3}+1)/3$, $L_5^{(2)}=11=(2^{5}+1)/3$ and $L_6^{(4)}=43=(2^{7}+1)/3$ are the only solutions to \eqref{eq:main}. 
\end{theorem}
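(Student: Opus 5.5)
We follow the standard scheme for Diophantine equations in $k$-generalized recurrences: Binet-type approximation, two applications of Matveev's theorem, and reduction of the resulting bounds. We will use the known facts about $L_n^{(k)}$:
\begin{itemize}
\item the dominant root $\alpha=\alpha(k)\in(2(1-2^{-k}),2)$ of $x^k-x^{k-1}-\cdots-1$;
\item the approximation $|L_n^{(k)}-(2\alpha-1)f_k(\alpha)\alpha^{n-1}|<3/2$ with $f_k(x)=(x-1)/(2+(k+1)(x-2))$;
\item the growth bounds $\alpha^{n-1}\le L_n^{(k)}\le 2\alpha^n$;
\item the closed form $L_n^{(k)}=3\cdot 2^{n-2}$ for $2\le n\le k$.
\end{itemize}

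\textbf{Small $n$.} First dispose of $n\le k$. Here $3\cdot 2^{n-2}=(2^{\mathfrak p}+1)/3$ forces $9\cdot 2^{n-2}=2^{\mathfrak p}+1$. The left side is even for $n\ge 3$ while the right side is odd, so $n=2$, giving $2^{\mathfrak p}=8$ and $\mathfrak p=3$. This is the infinite family $(2,k,3)$.

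\textbf{Case $n\ge k+1$.} Comparing sizes, $\alpha^{n-1}\le (2^{\mathfrak p}+1)/3<2^{\mathfrak p}$ gives $\mathfrak p<n$ roughly, and $n<2\mathfrak p+O(1)$.

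\textbf{First linear form.} Write the equation as
\[
(2\alpha-1)f_k(\alpha)\alpha^{n-1}-2^{\mathfrak p}/3=1/3+\text{error},
\]
so
\[
\Lambda_1=\bigl|3(2\alpha-1)f_k(\alpha)\,2^{-\mathfrak p}\alpha^{n-1}-1\bigr|<c/2^{\mathfrak p}.
\]
Matveev's theorem applied to three algebraic numbers $\gamma_1=3(2\alpha-1)f_k(\alpha)$, $\gamma_2=2$, $\gamma_3=\alpha$ in a field of degree $k$ requires $\Lambda_1\neq0$. Nonvanishing follows by applying a Galois conjugate, since the conjugates of $\alpha$ are inside the unit circle and the standard fact that $f_k(\alpha)$ is not an algebraic unit. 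The theorem yields $\mathfrak p<C\,k^4\log^2 k\log n$, hence $n<C'k^4\log^3k$.

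\textbf{Small $k$.} For $k\le 500$ (say), $n$ is bounded by roughly $10^{15}$ to $10^{17}$. Take logarithms to get
\[
|(n-1)\log\alpha-\mathfrak p\log2+\log\gamma_1|<c'/2^{\mathfrak p}.
\]
Apply LLL (or Baker--Davenport/Dujella--Peth\H{o}) for each $k$ to cut $\mathfrak p$ down to a few hundred. A direct computer search over $n$ then finds only $(5,2,5)$ and $(6,4,7)$.

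\textbf{Large $k$ (main obstacle).} For $k>500$ we have $n<C'k^4\log^3k<2^{k/2}$. Use the finer approximation
\[
L_n^{(k)}=3\cdot2^{n-2}(1+\zeta),\qquad |\zeta|<2^{-k/2}\ \text{(say)}.
\]
This follows from $\alpha^{n}=2^n(1+O(n/2^k))$ and $f_k(\alpha)\approx1/2$. The equation becomes
\[
|9\cdot2^{n-2-\mathfrak p}-1|<1/2^{\mathfrak p}+3\cdot2^{-k/2}.
\]
Since $9\cdot 2^{m}$ can never equal $1$, and $9\cdot 2^m$ for integer $m$ is at distance at least $1/8$ from $1$, the left side is at least $1/8$ while the right side is tiny. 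This contradiction excludes large $k$ entirely, without a second Matveev step.

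The delicate parts are getting the explicit bound on $\zeta$ for $n<2^{k/2}$ right, and making the Matveev bound explicit enough that the threshold $500$ works. If the threshold has to be larger, we would run the LLL reduction up to that $k$ instead.
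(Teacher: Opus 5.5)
Your proposal follows essentially the same route as the paper: the parity argument for $n\le k$, a single Matveev bound on $\Gamma_1$ giving $n\ll k^4(\log k)^3$, the approximation $L_n^{(k)}=3\cdot 2^{n-2}(1+O(2^{-k/2}))$ for $n<2^{k/2}$ to exclude large $k$ because $9\cdot 2^{m}$ stays away from $1$, and LLL reduction plus a finite search for small $k$ (the paper uses the threshold $k>190$ for the large-$k$ case and reduces to $n\le 141$ for $k\le 190$). The differences are cosmetic or numerical and do not affect the argument: the paper proves $\Gamma_1\ne 0$ via norms rather than conjugates, the available explicit estimate gives $|\zeta|$ of size $36\cdot 2^{-k/2}$ rather than $2^{-k/2}$, and for $k\le 500$ the initial bound on $n$ is of order $10^{28}$ rather than $10^{15}$--$10^{17}$.
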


\section{Methods}
\subsection{Preliminaries}
It is known that 
\begin{align}\label{eq2.2}
	L_n^{(k)} = 3 \cdot 2^{n-2},\qquad \text{for all}\qquad 2 \le n \le k.
\end{align}
Additionally, $L_{k+1}^{(k)}=3\cdot 2^{k-1}-2$ and by induction one proves that 
\begin{equation}
\label{eq:32}
L_n^{(k)}<3\cdot 2^{n-2}\qquad {\text{\rm holds for all}}\qquad n\ge k+1.
\end{equation}
Next, we revisit some properties of the $k$--generalized Lucas numbers. They form a linearly recurrent sequence of characteristic polynomial
\[
\Psi_k(x) = x^k - x^{k-1} - \cdots - x - 1,
\]
which is irreducible over $\mathbb{Q}[x]$. The polynomial $\Psi_k(x)$ possesses a unique real root $\alpha(k)>1$ and all the other roots are inside the unit circle, see \cite{MIL}. The root  $\alpha(k):=\alpha$ is in the interval
\begin{align}\label{eq2.3}
2(1 - 2^{-k} ) < \alpha < 2
\end{align}
as noted in \cite{WOL}. As in the classical case when $k=2$, it was shown in \cite{BRL} that 
\begin{align}\label{eq2.4}
	\alpha^{n-1} \le L_n^{(k)}\le2\alpha^n, \quad \text{holds for all} \quad n\ge1, ~k\ge 2.
\end{align}
In particular, combining \eqref{eq2.4} and \eqref{eq:main}, we have
\begin{align*}
	\alpha^{n-1} \le L_n^{(k)}=\frac{2^{\mathfrak{p}} + 1}{3}< 2^{\mathfrak{p}} + 2< 2^{\mathfrak{p}+1},
\end{align*}
for all odd primes $\mathfrak{p}$. Taking logarithms both sides, we get $(n-1)\log \alpha < (\mathfrak{p}+1)\log 2$, or equivalently $n<2\mathfrak{p}+3$. On the other hand, 
\begin{align*}
	2\alpha^{n} \ge L_n^{(k)}=\frac{2^{\mathfrak{p}} + 1}{3} > \frac{2^{\mathfrak{p}}}{3} > 2^{\mathfrak{p}-2},
\end{align*}
holds for all odd primes $\mathfrak{p}$. Taking logarithms both sides, we get $n\log \alpha > (\mathfrak{p}-3)\log 2$, or equivalently $n > \mathfrak{p}-3$, where we have used the fact that $\alpha<2$ from \eqref{eq2.3}. Therefore, we have
\begin{align}\label{m_b}
	\mathfrak{p}-3 <n<2\mathfrak{p}+3.
\end{align}

Next, let $k\ge 2$ and define
\begin{equation*}
	f_k(x):=\dfrac{x-1}{2+(k+1)(x-2)}.
\end{equation*}
We have 
$$
\frac{df_k(x)}{dx}=-\frac{(k-1)}{(2+(k+1)(x-2))^2}<0,\qquad {\text{\rm for~all}}\qquad x>0.
$$
In particular, inequality \eqref{eq2.3} implies that
\begin{align}\label{eq2.5}
	\dfrac{1}{2}=f_k(2)<f_k(\alpha)<f_k(2(1 - 2^{-k} ))\le \dfrac{3}{4},
\end{align}
for all $k\ge 3$. It is easy to check that the above inequality holds for $k=2$ as well. Further, it is easy to verify that $|f_k(\alpha_i)|<1$, for all $2\le i\le k$, where $\alpha_i$ are the remaining 
roots of $\Psi_k(x)$ for $i=2,\ldots,k$.

Moreover, it was shown in \cite{BRL} that
\begin{align}\label{eq2.6}
	L_n^{(k)}=\displaystyle\sum_{i=1}^{k}(2\alpha_i-1)f_k(\alpha_i)\alpha_i^{n-1}~~\text{and}~~\left|L_n^{(k)}-f_k(\alpha)(2\alpha-1)\alpha^{n-1}\right|<\dfrac{3}{2},
\end{align}
for all $k\ge 2$ and $n\ge 2-k$. This means that 
\begin{equation}
\label{eq:Lnwitherror}
L_n^{(k)}=f_k(\alpha)(2\alpha-1)\alpha^{n-1}+e_k(n), \qquad {\text{\rm where}}\qquad |e_k(n)|<1.5.
\end{equation} 
The left expression in \eqref{eq2.6} is known as the Binet formula for $L_n^{(k)}$. Furthermore, the right inequality expression in \eqref{eq2.6} shows that the contribution of the zeros that are inside the unit circle to $L_n^{(k)}$ is small. 

A better estimate than \eqref{eq2.6} appears in Section  3.3 page 14 of \cite{bat}, but with a more restricted range of $n$ in terms of $k$. It states that 
\begin{align}\label{lk_b1}
	\left| f_k(\alpha)(2\alpha - 1)\alpha^{n - 1}-3\cdot 2^{n-2}\right| < 3\cdot 2^{n-2}\cdot \frac{36}{2^{k/2}},\qquad {\text{\rm provided}}\qquad n<2^{k/2}.
\end{align}

\subsection{Linear forms in logarithms}
We use Baker--type lower bounds for nonzero linear forms in logarithms of algebraic numbers. There are many such bounds mentioned in the literature but we use one of Matveev from \cite{MAT}. Before we can formulate such inequalities, we need the notion of height of an algebraic number recalled below.  

\begin{definition}
	Let $ \gamma $ be an algebraic number of degree $ d $ with minimal primitive polynomial over the integers $$ a_{0}x^{d}+a_{1}x^{d-1}+\cdots+a_{d}=a_{0}\prod_{i=1}^{d}(x-\gamma^{(i)}), $$ where the leading coefficient $ a_{0} $ is positive. Then, the logarithmic height of $ \gamma$ is given by $$ h(\gamma):= \dfrac{1}{d}\Big(\log a_{0}+\sum_{i=1}^{d}\log \max\{|\gamma^{(i)}|,1\} \Big). $$
\end{definition}
In particular, if $ \gamma$ is a rational number represented as $\gamma=p/q$ with coprime integers $p$ and $ q\ge 1$, then $ h(\gamma ) = \log \max\{|p|, q\} $. 
The following properties of the logarithmic height function $ h(\cdot) $ will be used in the rest of the paper without further reference:
\begin{equation}\nonumber
	\begin{aligned}
		h(\gamma_{1}\pm\gamma_{2}) &\leq h(\gamma_{1})+h(\gamma_{2})+\log 2;\\
		h(\gamma_{1}\gamma_{2}^{\pm 1} ) &\leq h(\gamma_{1})+h(\gamma_{2});\\
		h(\gamma^{s}) &= |s|h(\gamma)  \quad {\text{\rm valid for}}\quad s\in \mathbb{Z}.
	\end{aligned}
\end{equation}
In Section 3, equation (12) of \cite{Brl} these properties were used to show the following inequality:
\begin{align}\label{eq2.9}
	h\left(f_k(\alpha)\right)<3\log k, ~~\text{for all}~~k\ge 2.
\end{align}

A linear form in logarithms is an expression
\begin{equation*}
	\Lambda:=b_1\log \gamma_1+\cdots+b_t\log \gamma_t,
\end{equation*}
where $\gamma_1,\ldots,\gamma_t$ are positive real  algebraic numbers and $b_1,\ldots,b_t$ are integers. We assume, $\Lambda\ne 0$. We need lower bounds 
for $|\Lambda|$. We write ${\mathbb K}:={\mathbb Q}(\gamma_1,\ldots,\gamma_t)$ and $D$ for the degree of ${\mathbb K}$ over ${\mathbb Q}$.
We start with the general form due to Matveev, see Theorem 9.4 in \cite{matl}. 

\begin{theorem}[Matveev, see Theorem 9.4 in \cite{matl}]
	\label{thm:Matl} 
	Put $\Gamma:=\gamma_1^{b_1}\cdots \gamma_t^{b_t}-1=e^{\Lambda}-1$. Assume $\Gamma\ne 0$. Then 
	$$
	\log |\Gamma|>-1.4\cdot 30^{t+3}\cdot t^{4.5} \cdot D^2 (1+\log D)(1+\log B)A_1\cdots A_t,
	$$
	where $B\ge \max\{|b_1|,\ldots,|b_t|\}$ and $A_i\ge \max\{Dh(\gamma_i),|\log \gamma_i|,0.16\}$ for $i=1,\ldots,t$.
\end{theorem}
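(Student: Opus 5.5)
This statement is not proved in the paper; it is imported from the literature. My plan is therefore to derive this exact form from Matveev's original explicit lower bound for a linear form in logarithms, not to reprove Baker's theory.

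\textbf{Starting point.} Matveev's theorem, in the real case with all $\gamma_i$ positive real, gives for $\Lambda=b_1\log\gamma_1+\cdots+b_t\log\gamma_t\neq 0$ a bound of the form
\[
\log|\Lambda| > -C(t,D)\,(1+\log B)\,A_1\cdots A_t .
\]
Here $C(t,D)$ can be taken as $1.4\cdot 30^{t+3}\, t^{4.5}\, D^2(1+\log D)$. The quantities $A_i$ are modified heights: they satisfy $A_i\ge \max\{Dh(\gamma_i),|\log\gamma_i|,0.16\}$, and $B$ bounds the exponents.

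\textbf{First step: nonvanishing.} Since the $\gamma_i$ are positive reals and $\Gamma=e^{\Lambda}-1$, we have $\Gamma\neq 0$ if and only if $\Lambda\neq 0$. So the hypothesis transfers directly.

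\textbf{Second step: pass from $\Lambda$ to $\Gamma$.} I would split into two cases.
\begin{itemize}
\item If $|\Gamma|\ge 1/2$, the claimed inequality is trivial. Its right-hand side is at most $-1.4\cdot 30^{4}\cdot 0.16<\log(1/2)$.
\item If $|\Gamma|<1/2$, then $e^{\Lambda}\in(1/2,3/2)$, so $|\Lambda|<\log 2$. The elementary inequality $|\Lambda|\le 2|e^{\Lambda}-1|$ then holds in that range, giving $\log|\Gamma|\ge \log|\Lambda|-\log 2$.
\end{itemize}
The extra $-\log 2$ has to be absorbed into the constant. Matveev's constant in the real case is actually smaller than the stated $1.4\cdot 30^{t+3}t^{4.5}$, since it carries a factor like $\tfrac12 e$ that is then rounded up. Alternatively, the bound $C(t,D)(1+\log B)A_1\cdots A_t\ge 1.4\cdot 30^4\cdot 0.16 \gg \log 2$ shows that an additive $\log 2$ costs only a negligible multiplicative change. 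This is exactly how Bugeaud's Theorem 9.4 is stated, so I would verify the bookkeeping against that source.

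\textbf{Main obstacle.} The real content is Matveev's bound itself. Its proof goes through an auxiliary polynomial constructed via Siegel's lemma, extrapolation using Kummer descent, and a multiplicity/zero estimate, with very careful tracking of constants. That cannot reasonably be reproduced here, so the proof in the paper amounts to a citation of Theorem 9.4 in \cite{matl}. The only checks I would perform are the reduction above and confirming that the hypotheses (real positive $\gamma_i$, $D=[\mathbb{K}:\mathbb{Q}]$, and the stated choices of $A_i$ and $B$) match those of the cited theorem.
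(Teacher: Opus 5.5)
Your proposal is correct and matches the paper, which gives no proof and simply quotes Theorem 9.4 of \cite{matl}. Your sketch of how that form follows from Matveev's original bound for $|\Lambda|$ is sound: the extra $\log 2$ is absorbed by the slack between Matveev's real-case constant $\tfrac{e}{2}\approx 1.359$ and $1.4$. Your ``alternatively'' remark would not suffice on its own, since it only yields a constant slightly larger than $1.4$, so the slack argument is the one that delivers the exact stated inequality.
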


In our application of Theorem \ref{thm:Matl}, we need to ensure that the linear forms in logarithms are indeed nonzero. To ensure this, we shall need the following result given as Lemma 2.8 in \cite{GGL1}.

\begin{lemma}[Lemma 2.8 in \cite{GGL1}]\label{lemGLm}
	Let \( N := N_{\mathbb{K}/\mathbb{Q}}, \) where \( \mathbb{K} = \mathbb{Q}(\alpha) \). Then
	\begin{enumerate}[(i)]
		\item For \( n, m \geq 1 \) and \( k \geq 2 \), \( |N(\alpha)| = 1 \).
		\item \( N(2\alpha - 1) = 2^{k+1} - 3 \) and \( N(f_k(\alpha)) = (k - 1)^2 / (2^{k+1}k^k - (k + 1)^{k+1}) \).
		\item For \( k \geq 3 \), \( N((2\alpha - 1)f_k(\alpha)) < 1 \).
	\end{enumerate}	
\end{lemma}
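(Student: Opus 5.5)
Each of the three claims reduces to evaluating the characteristic polynomial $\Psi_k$ at a suitable rational point. Throughout, $\alpha=\alpha_1,\alpha_2,\ldots,\alpha_k$ are the conjugates of $\alpha$. Since $\Psi_k$ is irreducible, $\mathbb{K}=\mathbb{Q}(\alpha)$ has degree $k$. The norm of any rational expression $g(\alpha)$ is therefore $\prod_{i=1}^k g(\alpha_i)$. The basic tool is the identity $\prod_{i=1}^k (x-\alpha_i)=\Psi_k(x)$, so that
\[
\prod_{i=1}^k(\alpha_i-x)=(-1)^k\Psi_k(x),
\]
together with the telescoping identity
\[
(x-1)\Psi_k(x)=x^{k+1}-2x^k+1,
\]
which lets me evaluate $\Psi_k$ in closed form at any rational $x\neq 1$.

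For (i), I take $x=0$. This gives $N(\alpha)=\prod\alpha_i=(-1)^k\Psi_k(0)=(-1)^{k+1}$, so $|N(\alpha)|=1$.

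For the first part of (ii), I write $N(2\alpha-1)=2^k\prod(\alpha_i-\tfrac12)=(-2)^k\Psi_k(1/2)$. The telescoping identity at $x=1/2$ gives $-\tfrac12\Psi_k(1/2)=1-3\cdot 2^{-k-1}$, hence $\Psi_k(1/2)=3\cdot2^{-k}-2$. Therefore $N(2\alpha-1)=(-1)^{k+1}(2^{k+1}-3)$. This agrees with the stated value up to the sign $(-1)^{k+1}$, and only the absolute value matters in later applications.

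For $f_k(\alpha)$, I rewrite the denominator as $2+(k+1)(x-2)=(k+1)(x-c)$ with $c=2k/(k+1)$, and I handle numerator and denominator separately.
\begin{itemize}
\item The numerator norm is $\prod(\alpha_i-1)=(-1)^k\Psi_k(1)=(-1)^k(1-k)$.
\item The denominator norm is $(k+1)^k(-1)^k\Psi_k(c)$. Since $c-1=(k-1)/(k+1)$ and $c^k(c-2)+1=1-2(2k)^k/(k+1)^{k+1}$, the telescoping identity gives
\[
\Psi_k(c)=\frac{(k+1)^{k+1}-2^{k+1}k^k}{(k+1)^k(k-1)}.
\]
\end{itemize}
Dividing the two norms, the factors $(-1)^k$ cancel and I obtain exactly $(k-1)^2/(2^{k+1}k^k-(k+1)^{k+1})$. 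This denominator is positive for all $k\ge2$: it equals $5$ at $k=2$, and in general $(k+1)^{k+1}<e(k+1)k^k$.

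For (iii), I use multiplicativity of the norm and (ii) to get
\[
|N((2\alpha-1)f_k(\alpha))|=\frac{(2^{k+1}-3)(k-1)^2}{2^{k+1}k^k-(k+1)^{k+1}},
\]
and I need this to be less than $1$.
\begin{itemize}
\item For $k=3$ I check directly: $52<176$.
\item For $k\ge4$ I bound the denominator from below by $k^k(2^{k+1}-e(k+1))$, using $(1+1/k)^k<e$. I then use $2^{k+1}-e(k+1)\ge\tfrac12(2^{k+1}-3)$, which holds for $k\ge4$ because at $k=4$ it reads $18.4\ge14.5$ and the left side grows faster. This reduces the claim to $(k-1)^2<k^k/2$, which is trivial.
\end{itemize}

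The only delicate point I expect is bookkeeping of the signs $(-1)^k$. The statement of (ii) implicitly refers to absolute values, or to signs that cancel in the quotient, and I would say so explicitly.
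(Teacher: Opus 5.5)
Your proof is correct. The paper does not prove this lemma; it quotes it from \cite{GGL1}, so there is no in-paper argument to compare against. Your derivation is a sound self-contained route: you evaluate $\Psi_k$ at $0$, $1/2$, $1$ and $2k/(k+1)$ via $(x-1)\Psi_k(x)=x^{k+1}-2x^k+1$. Every computation checks out, including the closed form for $\Psi_k(2k/(k+1))$, the cancellation of the $(-1)^k$ factors in $N(f_k(\alpha))$, and the direct checks at $k=3$ and at $k=4$ for the $k\ge 4$ estimate.

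Your sign remark is a genuine correction to the statement as printed, not a bookkeeping artifact. $N(2\alpha-1)=(-1)^{k+1}(2^{k+1}-3)$, so the first formula in (ii) is false for even $k$. For $k=2$, for instance, $(2\alpha-1)(2\bar\alpha-1)=\sqrt{5}\cdot(-\sqrt{5})=-5$. For even $k$ the inequality in (iii) is then trivial, because the norm is negative.

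What the paper actually uses when it shows $\Gamma_1\neq 0$ is $|N((2\alpha-1)f_k(\alpha))|\le 1$ for all $k\ge 2$. At $k=2$ this holds with equality, since $5\cdot\tfrac15=1$. That is exactly your absolute-value version of (iii) together with the direct $k=2$ value.

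One small tightening: $(k+1)^{k+1}<e(k+1)k^k$ gives positivity of $2^{k+1}k^k-(k+1)^{k+1}$ only once $e(k+1)<2^{k+1}$, that is, for $k\ge 3$. At $k=2$ you have $3e>8$, so your separate check there is logically needed, not merely illustrative. The hypothesis ``$n,m\ge 1$'' in (i) plays no role, and you were right to ignore it.
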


During the calculations, upper bounds on the variables are obtained which are too large, thus there is need to reduce them. To do so, we use some results from
approximation lattices and the so-called LLL--reduction method from \cite{LLL}. We explain this in the following subsection.

\subsection{Reduced Bases for Lattices and LLL--reduction methods}\label{sec2.3}

Let $k$ be a positive integer. A subset $\mathcal{L}$ of the $k$-dimensional real vector space ${ \mathbb{R}^k}$ is called a lattice if there exists a basis $\{b_1, b_2, \ldots, b_k \}$ of $\mathbb{R}^k$ such that
\begin{align*}
	\mathcal{L} = \sum_{i=1}^{k} \mathbb{Z} b_i = \left\{ \sum_{i=1}^{k} r_i b_i \mid r_i \in \mathbb{Z} \right\}.
\end{align*}
We say that $b_1, b_2, \ldots, b_k$ form a basis for $\mathcal{L}$, or that they span $\mathcal{L}$. We
call $k$ the rank of $ \mathcal{L}$. The determinant $\text{det}(\mathcal{L})$, of $\mathcal{L}$ is defined by
\begin{align*}
	\text{det}(\mathcal{L}) = | \det(b_1, b_2, \ldots, b_k) |,
\end{align*}
with the $b_i$'s being written as column vectors. This is a positive real number that does not depend on the choice of the basis (see \cite{Cas}, Section 1.2).

Given linearly independent vectors $b_1, b_2, \ldots, b_k $ in $ \mathbb{R}^k$, we refer back to the Gram--Schmidt orthogonalization technique. This method allows us to inductively define vectors $b^*_i$ (with $1 \leq i \leq k$) and real coefficients $\mu_{i,j}$ (for $1 \leq j \leq i \leq k$). Specifically,
\begin{align*}
	b^*_i &= b_i - \sum_{j=1}^{i-1} \mu_{i,j} b^*_j,~~~
	\mu_{i,j} = \dfrac{\langle b_i, b^*_j\rangle }{\langle b^*_j, b^*_j\rangle},
\end{align*}
where \( \langle \cdot , \cdot \rangle \)  denotes the ordinary inner product on \( \mathbb{R}^k \). Notice that \( b^*_i \) is the orthogonal projection of \( b_i \) on the orthogonal complement of the span of \( b_1, \ldots, b_{i-1} \), and that \( \mathbb{R}b_i \) is orthogonal to the span of \( b^*_1, \ldots, b^*_{i-1} \) for \( 1 \leq i \leq k \). It follows that \( b^*_1, b^*_2, \ldots, b^*_k \) is an orthogonal basis of \( \mathbb{R}^k \). 
\begin{definition}
	The basis $b_1, b_2, \ldots, b_n$ for the lattice $\mathcal{L}$ is called reduced if
	\begin{align*}
		\| \mu_{i,j} \| &\leq \frac{1}{2}, \quad \text{for} \quad 1 \leq j < i \leq n,~~
		\text{and}\\
		\|b^*_{i}+\mu_{i,i-1} b^*_{i-1}\|^2 &\geq \frac{3}{4}\|b^*_{i-1}\|^2, \quad \text{for} \quad 1 < i \leq n,
	\end{align*}
	where $ \| \cdot \| $ denotes the ordinary Euclidean length. The constant $ {3}/{4}$ above is arbitrarily chosen, and may be replaced by any fixed real number $ y $ in the interval ${1}/{4} < y < 1$ {\rm(see \cite{LLL}, Section 1)}.
\end{definition}
Let $\mathcal{L}\subseteq\mathbb{R}^k$ be a $k-$dimensional lattice  with reduced basis $b_1,\ldots,b_k$ and denote by $B$ the matrix with columns $b_1,\ldots,b_k$. 
We define
\[
l\left( \mathcal{L},y\right)= \left\{ \begin{array}{c}
	\min_{x\in \mathcal{L}}||x-y|| \quad  ;~~ y\not\in \mathcal{L}\\
	\min_{0\ne x\in \mathcal{L}}||x|| \quad  ;~~ y\in \mathcal{L}
\end{array}
\right.,
\]
where $||\cdot||$ denotes the Euclidean norm on $\mathbb{R}^k$. It is well known that, by applying the
LLL--algorithm, it is possible to give in polynomial time a lower bound for $l\left( \mathcal{L},y\right)$, namely a positive constant $\delta$ such that $l\left(\mathcal{L},y\right)\ge \delta$ holds (see \cite{SMA}, Section V.4).
\begin{lemma}[\cite{SMA}, Section V.4]\label{lem2.5m}
	Let $b_1, \dots, b_k$ be an LLL-reduced basis for a lattice $\mathcal{L}$ and $b_1^*, \dots, b_k^*$ be the corresponding Gram-Schmidt orthogonal basis. Let $y\in\mathbb{R}^k$ and $z=B^{-1}y$.
	\begin{enumerate}[(i)]
		\item If $y\not \in \mathcal{L}$, let $i_0$ be the largest index such that $z_{i_0}\ne 0$ and put $\lambda:=\{z_{i_0}\}$.
		\item If $y\in \mathcal{L}$, put $\lambda:=1$.
	\end{enumerate}
	Now, define
	\[
	c_1 := \max_{1\le j\le k}\left\{\dfrac{\|b_1\|}{\|b_j^*\|}\right\}.
	\]
	Then, $l(\mathcal{L}, y) \ge  \delta = \lambda\|b_1\|c_1^{-1}$.
\end{lemma}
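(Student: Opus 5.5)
The plan is to expand an arbitrary lattice vector minus $y$ in the basis $b_1,\ldots,b_k$ and bound its norm from below by a single Gram--Schmidt component. Fix $x\in\mathcal{L}$ and write $x=Bw$ with $w\in\mathbb{Z}^k$, so that
\[
x-y=B(w-z)=\sum_{i=1}^{k}(w_i-z_i)\,b_i .
\]
Put $c_i:=w_i-z_i$ and let $j$ be the largest index with $c_j\neq 0$; this assumes $x\neq y$, which holds automatically when $y\notin\mathcal{L}$ and is imposed when $y\in\mathcal{L}$ since then $x\neq 0=y-y$ after translating. The key observation is that, by the triangular relation $b_i=b_i^*+\sum_{l<i}\mu_{i,l}b_l^*$, the coefficient of $b_j^*$ in the expansion of $x-y$ in the orthogonal basis $b_1^*,\ldots,b_k^*$ is exactly $c_j$. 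This holds because $b_i$ for $i<j$ has no $b_j^*$-component, and $b_j$ has $b_j^*$-component $1$. Orthogonality then gives
\[
\|x-y\|\ \ge\ |c_j|\,\|b_j^*\| .
\]

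Next I will show $|c_j|\ge\lambda$ in each case. If $y\in\mathcal{L}$, then $z\in\mathbb{Z}^k$ and $c$ is a nonzero integer vector, so $|c_j|\ge 1=\lambda$. If $y\notin\mathcal{L}$, let $i_0$ be as in the statement, so $z_i=0$ for $i>i_0$. There are two subcases. If $j>i_0$, then $c_j=w_j\in\mathbb{Z}\setminus\{0\}$ and $|c_j|\ge 1\ge\lambda$. If $j\le i_0$, then $c_{i_0}=w_{i_0}-z_{i_0}$ must be nonzero: it is nonzero when $z_{i_0}\notin\mathbb{Z}$, and when $z_{i_0}\in\mathbb{Z}$ we have $\lambda=0$ and the bound is trivial. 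Hence $j=i_0$ and $|c_j|=|w_{i_0}-z_{i_0}|$, which is at least the distance from $z_{i_0}$ to the nearest integer. This is the one delicate point. The argument naturally produces $\|z_{i_0}\|=\min\{\{z_{i_0}\},1-\{z_{i_0}\}\}$ rather than $\{z_{i_0}\}$, so I will read $\lambda$ in that sense, as in Smart's formulation. With this reading every case gives $|c_j|\ge \lambda$.

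Finally, $\|b_j^*\|\ge \|b_1\|/c_1$ holds by the very definition of $c_1=\max_j \|b_1\|/\|b_j^*\|$. Combining the pieces,
\[
\|x-y\|\ \ge\ \lambda\,\|b_j^*\|\ \ge\ \lambda\,\|b_1\|\,c_1^{-1}=\delta ,
\]
and taking the minimum over $x\in\mathcal{L}$, with $x\neq 0$ when $y\in\mathcal{L}$, gives $l(\mathcal{L},y)\ge\delta$. The LLL-reducedness of the basis is not needed for the inequality itself. It only guarantees that $c_1$ is moderate, since $\|b_j^*\|^2\ge 2^{1-j}\|b_1\|^2$ gives $c_1\le 2^{(k-1)/2}$, so the bound is useful in practice. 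I expect the main obstacle to be the bookkeeping in the case $y\notin\mathcal{L}$, namely showing that the last nonzero coefficient is either an honest nonzero integer or sits exactly at index $i_0$.
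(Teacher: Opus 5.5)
Your proof is correct and is the standard argument behind the cited result; the paper gives no proof of its own and only quotes the lemma from Smart. The argument is that the last nonzero coefficient of $x-y$ in the basis $b_1,\ldots,b_k$ is also its $b_j^*$-coefficient, and orthogonality together with the definition of $c_1$ finishes it. Your reading of $\lambda$ as the distance from $z_{i_0}$ to the nearest integer is the intended one and is genuinely needed: with $\{\cdot\}$ as the fractional part the statement already fails for $\mathcal{L}=\mathbb{Z}$ and $y=0.9$, where $\delta=0.9$ but $l(\mathcal{L},y)=0.1$.
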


In our application, we are given real numbers $\eta_0,\eta_1,\ldots,\eta_k$ which are linearly independent over $\mathbb{Q}$ and two positive constants $c_3$ and $c_4$ such that 
\begin{align}\label{2.9m}
	|\eta_0+x_1\eta_1+\cdots +x_k \eta_k|\le c_3 \exp(-c_4 H),
\end{align}
where the integers $x_i$ are bounded as $|x_i|\le X_i$ with $X_i$ given upper bounds for $1\le i\le k$. We write $X_0:=\max\limits_{1\le i\le k}\{X_i\}$. The basic idea in such a situation, from \cite{Weg}, is to approximate the linear form \eqref{2.9m} by an approximation lattice. So, we consider the lattice $\mathcal{L}$ generated by the columns of the matrix
$$ \mathcal{A}=\begin{pmatrix}
	1 & 0 &\ldots& 0 & 0 \\
	0 & 1 &\ldots& 0 & 0 \\
	\vdots & \vdots &\vdots& \vdots & \vdots \\
	0 & 0 &\ldots& 1 & 0 \\
	\lfloor C\eta_1\rfloor & \lfloor C\eta_2\rfloor&\ldots & \lfloor C\eta_{k-1}\rfloor& \lfloor C\eta_{k} \rfloor
\end{pmatrix} ,$$
where $C$ is a large constant usually of the size of about $X_0^k$ . Let us assume that we have an LLL--reduced basis $b_1,\ldots, b_k$ of $\mathcal{L}$ and that we have a lower bound $l\left(\mathcal{L},y\right)\ge \delta$ with $y:=(0,0,\ldots,-\lfloor C\eta_0\rfloor)$. Note that $ \delta$ can be computed by using the results of Lemma \ref{lem2.5m}. Then, with these notations the following result  is Lemma VI.1 in \cite{SMA}.
\begin{lemma}[Lemma VI.1 in \cite{SMA}]\label{lem2.6m}
	Let $S:=\displaystyle\sum_{i=1}^{k-1}X_i^2$ and $T:=\dfrac{1+\sum_{i=1}^{k}X_i}{2}$. If $\delta^2\ge T^2+S$, then inequality \eqref{2.9m} implies that we either have $x_1=x_2=\cdots=x_{k-1}=0$ and $x_k=-\dfrac{\lfloor C\eta_0 \rfloor}{\lfloor C\eta_k \rfloor}$, or
	\[
	H\le \dfrac{1}{c_4}\left(\log(Cc_3)-\log\left(\sqrt{\delta^2-S}-T\right)\right).
	\]
\end{lemma}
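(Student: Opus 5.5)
The plan is to compare the given real linear form with the integer vector $\mathcal{A}x-y$, where $x=(x_1,\ldots,x_k)^{T}$ and $y=(0,\ldots,0,-\lfloor C\eta_0\rfloor)^{T}$. Concretely, $\mathcal{A}x$ is a point of $\mathcal{L}$, so $\mathcal{A}x-y$ is a difference between a lattice point and $y$. Then I would play the lower bound $\delta$ from Lemma \ref{lem2.5m} against the upper bound \eqref{2.9m}. Only the last coordinate of $\mathcal{A}x-y$ carries information about the linear form; the first $k-1$ coordinates are just $x_1,\ldots,x_{k-1}$, and they account for the quantity $S$.

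First I would write out the vector explicitly:
\[
\mathcal{A}x-y=\Bigl(x_1,\ldots,x_{k-1},\ \phi\Bigr),\qquad \phi:=\lfloor C\eta_0\rfloor+\sum_{i=1}^{k}x_i\lfloor C\eta_i\rfloor .
\]
Next I would compare $\phi$ with $C(\eta_0+x_1\eta_1+\cdots+x_k\eta_k)$. Each of the $k+1$ roundings contributes an error of at most $1$, multiplied by the corresponding $|x_i|$ and with weight $1$ for $\eta_0$. The natural target is therefore
\[
\Bigl|\phi-C(\eta_0+x_1\eta_1+\cdots+x_k\eta_k)\Bigr|\le T .
\]
Strictly, the bound $T=(1+\sum X_i)/2$ corresponds to nearest-integer rounding, with error at most $1/2$ each. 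With floors literally one gets $2T$. I would either interpret $\lfloor\cdot\rfloor$ as rounding to the nearest integer, as is customary in this method, or note that the constant changes harmlessly. Combining this with \eqref{2.9m} gives
\[
|\phi|\le Cc_3\exp(-c_4H)+T .
\]

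Then I split into two cases. If $\mathcal{A}x=y$, then the first $k-1$ coordinates force $x_1=\cdots=x_{k-1}=0$. The last coordinate then gives $x_k\lfloor C\eta_k\rfloor=-\lfloor C\eta_0\rfloor$, which is exactly the exceptional alternative in the statement. Otherwise $\mathcal{A}x-y\neq0$, and I claim that $\|\mathcal{A}x-y\|\ge l(\mathcal{L},y)\ge\delta$ in both sub-cases:
\begin{enumerate}[(i)]
\item if $y\notin\mathcal{L}$, this is the definition of $l(\mathcal{L},y)$ as a minimum distance to $y$;
\item if $y\in\mathcal{L}$, then $\mathcal{A}x-y$ is itself a nonzero lattice vector, so its norm is at least $\min_{0\ne z\in\mathcal{L}}\|z\|=l(\mathcal{L},y)$.
\end{enumerate}
Hence
\[
\delta^2\le\sum_{i=1}^{k-1}x_i^2+\phi^2\le S+\phi^2 ,
\]
so $|\phi|\ge\sqrt{\delta^2-S}$, which is meaningful because the hypothesis $\delta^2\ge T^2+S$ gives $\delta^2\ge S$.

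Finally I combine the two bounds on $|\phi|$ to get
\[
Cc_3\exp(-c_4H)\ge\sqrt{\delta^2-S}-T .
\]
The right-hand side is $\ge0$ by the hypothesis $\delta^2\ge T^2+S$. For the logarithm to make sense I need it strictly positive, which follows if the hypothesis is taken with strict inequality; this is the generic situation in applications. Taking logarithms and dividing by $c_4$ then yields
\[
H\le\frac{1}{c_4}\Bigl(\log(Cc_3)-\log\bigl(\sqrt{\delta^2-S}-T\bigr)\Bigr).
\]
The argument has no deep step. The points needing care are the bookkeeping of the rounding error, so that the constant is exactly $T$, and the correct handling of the case $y\in\mathcal{L}$, where $\delta$ bounds nonzero lattice vectors rather than distances to $y$.
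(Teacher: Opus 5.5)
Your argument is correct and is the standard proof of Smart's Lemma VI.1; the paper only cites that lemma and gives no proof of its own. You write $\mathcal{A}x-y=(x_1,\ldots,x_{k-1},\phi)$, bound $|\phi-C(\eta_0+\sum x_i\eta_i)|\le T$, and play $\|\mathcal{A}x-y\|\ge\delta$ (valid whether or not $y\in\mathcal{L}$) against \eqref{2.9m}, with the exceptional alternative being exactly $\mathcal{A}x=y$. Your two caveats are also accurate: the constant $T$ needs $\lfloor\cdot\rfloor$ to mean nearest-integer rounding (literal floors give $2T$), and the logarithm needs the strict inequality $\delta^2>T^2+S$, since equality makes the bound vacuous.
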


Finally, we present an analytic argument which is Lemma 7 in \cite{GL}.  
\begin{lemma}[Lemma 7 in \cite{GL}]\label{Guz} If $ r \geq 1 $, $T > (4r^2)^r$ and $T >  \dfrac{p}{(\log p)^r}$, then $$p < 2^r T (\log T)^r.$$	
\end{lemma}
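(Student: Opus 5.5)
The statement is Lemma \ref{Guz}, which is quoted from \cite{GL}. The plan is a direct analytic argument by contradiction: assume $p \ge 2^r T(\log T)^r$ and contradict the hypothesis $T > p/(\log p)^r$.

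First, note that the function $g(x)=x/(\log x)^r$ is increasing for $x>e^r$. Since $T>(4r^2)^r \ge 4$, we have $\log T>0$. Moreover, $2^rT(\log T)^r$ is certainly larger than $e^r$, so if $p\ge 2^rT(\log T)^r$ then $p$ lies in the increasing range of $g$. Monotonicity then gives
\[
T > g(p) \ge g\bigl(2^rT(\log T)^r\bigr)=\frac{2^rT(\log T)^r}{\bigl(r\log 2+\log T+r\log\log T\bigr)^r}.
\]
After cancelling $T$ and taking $r$-th roots, this yields $r\log 2+\log T+r\log\log T>2\log T$, that is,
\[
\log T< r\log 2+r\log\log T = r\log(2\log T).
\]

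The remaining step, which is the only real obstacle, is to show this inequality fails when $T>(4r^2)^r$. I would put $u=\log T$, so that $u>r\log(4r^2)$, and show $u>r\log(2u)$, equivalently $u/r-\log u>\log 2$. The function $u\mapsto u/r-\log u$ is increasing for $u>r$, and $r\log(4r^2)>r$ holds since $4r^2>e$. It therefore suffices to check the inequality at $u_0=r\log(4r^2)$. There it reads
\[
\log(4r^2)-\log r-\log\log(4r^2)>\log 2,
\]
which simplifies to $2r>\log(4r^2)$. Writing the right side as $\log 4+2\log r$, this is $r-\log r>\log 2$, and that holds for all $r\ge1$ because the minimum of $r-\log r$ on $r\ge 1$ is $1>\log 2$.

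This contradiction gives $p<2^rT(\log T)^r$. The only care needed is in the monotonicity checks (the ranges $x>e^r$ and $u>r$), and these follow from $T>(4r^2)^r$ and $r\ge1$. The case $p\le e^r$ is trivial, since then $p<2^rT(\log T)^r$ holds immediately.
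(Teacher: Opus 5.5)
The paper gives no proof of this lemma; it quotes it as Lemma 7 of \cite{GL}, so there is no internal argument to compare against. Your proof is correct and complete. The contradiction argument uses that $x/(\log x)^r$ is increasing on $(e^r,\infty)$, which applies because $2^rT(\log T)^r=T(2\log T)^r>T>4^r>e^r$. This reduces the claim to showing that $\log T<r\log(2\log T)$ is impossible. Your verification that this inequality fails once $\log T>r\log(4r^2)$, via $r-\log r>\log 2$ for $r\ge 1$, is right. Your closing remark about $p\le e^r$ is unnecessary, since the contradiction hypothesis already forces $p>e^r$.
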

SageMath 10.6 is used to perform all computations in this work.

\section{Proof of Theorem \ref{thm1.1}.}\label{Sec3}
In this section, we prove Theorem \ref{thm1.1}. We do this in two separate cases depending on $n$ versus $k$.
\subsection{The case $2\le n\le k$}
In this case, we have $L_n^{(k)} = 3 \cdot 2^{n-2}$. Comparing \eqref{eq2.2} with \eqref{eq:main}, we have $3 \cdot 2^{n-2}=(2^{\mathfrak{p}} + 1)/3$, or simply
\begin{align}\label{eq:nk}
	 9 \cdot 2^{n-2}=2^{\mathfrak{p}} + 1.
\end{align}
If $n=2$, then $9 \cdot 2^{2-2}=2^{\mathfrak{p}} + 1$, from which $\mathfrak{p}=3$ for all $k\ge 2$. This solution is stated in the main result. If $n\ge 3$, then the left-hand side of \eqref{eq:nk} is even while the right-hand side is odd. This means that \eqref{eq:nk} has no solutions in the interval $3\le n\le k$. From now on, we assume $n>k$.

\subsection{The case $n> k$}
In this case, we assume $n>k$. Since $k\ge 2$, then we can also assume that $n\ge 3$ for the remaining part of the proof.
\subsubsection{Bounding $n$  in terms of $k$}
We begin by proving the following result.
\begin{lemma}\label{lem3.1}
	Let $n$, $k$, $\mathfrak{p}$ be integer solutions to Eq. \eqref{eq:main} with $n>k\ge 2$ and $\mathfrak{p}\ge 3$ be a prime number, then	
\begin{align*}
n<2.0\cdot 10^{15}  k^4(\log k)^3.
\end{align*}	
\end{lemma}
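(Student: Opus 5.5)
We follow the standard Baker-type argument. Combine \eqref{eq:Lnwitherror} with \eqref{eq:main} to get
\[
f_k(\alpha)(2\alpha-1)\alpha^{n-1}-\frac{2^{\mathfrak{p}}}{3}=\frac13-e_k(n).
\]
The right-hand side has absolute value less than $1.5+1/3<2$. Dividing through by $2^{\mathfrak{p}}/3$ gives
\[
\Gamma:=3f_k(\alpha)(2\alpha-1)\alpha^{n-1}2^{-\mathfrak{p}}-1, \qquad |\Gamma|<\frac{6}{2^{\mathfrak{p}}}<\frac{c}{\alpha^{n}},
\]
where the last step uses $n<2\mathfrak{p}+3$ from \eqref{m_b} and $\alpha<2$. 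Concretely, $2^{\mathfrak{p}}>2^{(n-3)/2}$, so $|\Gamma|<6\cdot 2^{3/2}\cdot 2^{-n/2}$. Keeping the bound in the form $|\Gamma|<17\cdot 2^{-\mathfrak{p}}$ and converting to $n$ at the end is probably cleanest.

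We apply Theorem \ref{thm:Matl} with $t=3$, working in $\mathbb{K}=\mathbb{Q}(\alpha)$ of degree $D=k$. The data are
\[
\gamma_1=3f_k(\alpha)(2\alpha-1),\quad \gamma_2=\alpha,\quad \gamma_3=2,\qquad b_1=1,\quad b_2=n-1,\quad b_3=-\mathfrak{p}.
\]
For the heights:
\begin{itemize}
\item By \eqref{eq2.9}, $h(\gamma_1)\le \log 3+3\log k+h(2\alpha-1)$, and $h(2\alpha-1)\le \log 2+h(\alpha)+\log 2$ with $h(\alpha)=(\log\alpha)/k<0.7/k$. Hence $h(\gamma_1)<4\log k+3$, which is at most $c\log k$ for $k\ge 2$. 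We may take $A_1=8k\log k$ (or a similar value), after checking that $|\log\gamma_1|$ is small.
\item We take $A_2=0.7$ and $A_3=0.7k$.
\item Since $\mathfrak{p}<n+3$, we take $B=n+3$, or simply $B=2n$.
\end{itemize}

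The step needing care is $\Gamma\neq 0$. If $\Gamma=0$, then $3f_k(\alpha)(2\alpha-1)\alpha^{n-1}=2^{\mathfrak{p}}$. Taking norms and using Lemma \ref{lemGLm}(i),(iii) gives $3^k|N(f_k(\alpha)(2\alpha-1))|=2^{k\mathfrak{p}}$. For $k\ge3$, part (iii) gives $|N|<1$, which alone is not contradictory, so it is simpler to use a Galois conjugate instead. Conjugate the relation by $\sigma$ with $\sigma(\alpha)=\alpha_i$, $i\ge2$. The right-hand side stays $2^{\mathfrak{p}}\ge 8$. On the left, $|f_k(\alpha_i)|<1$, $|2\alpha_i-1|<3$ and $|\alpha_i|<1$, so the left-hand side has absolute value less than $9$. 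This bounds $\mathfrak{p}\le 3$, and $\mathfrak{p}=3$ is then excluded by hand, since $n\ge3$ forces a small case checkable directly. The case $k=2$ is handled by the same conjugate argument, or by direct computation.

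Matveev then yields
\[
\mathfrak{p}\log 2-\log 17<1.4\cdot 30^6\cdot 3^{4.5}\cdot k^2(1+\log k)(1+\log 2n)\cdot(8k\log k)(0.7)(0.7k).
\]
Using $1+\log k<2\log k$ and $1+\log 2n<2\log n$ for $k\ge 2$ and $n\ge 3$, this becomes $\mathfrak{p}<c\,k^4(\log k)^2\log n$ with $c\approx 10^{13}$. Since $n<2\mathfrak{p}+3$, we get
\[
n<c'\,k^4(\log k)^2\log n, \qquad\text{that is,}\qquad \frac{n}{\log n}<c'k^4(\log k)^2=:T.
\]
Lemma \ref{Guz} with $r=1$ (valid since $T>4$) gives $n<2T\log T$. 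Finally, $\log T<\log c'+4\log k+2\log\log k<C\log k$ for $k\ge2$, which leads to $n<C''k^4(\log k)^3$. The constant $2.0\cdot10^{15}$ should emerge once $A_1$ is optimised.

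The main obstacle is bookkeeping the constants tightly enough to land at $2.0\cdot10^{15}$, together with the nonvanishing of $\Gamma$, especially the small-$\mathfrak{p}$ edge case.
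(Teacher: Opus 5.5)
Your proposal is correct and is essentially the paper's proof: Theorem~\ref{thm:Matl} with $D=k$ applied to the same three-term form in $2$, $\alpha$ and $3(2\alpha-1)f_k(\alpha)$, followed by Lemma~\ref{Guz} with $r=1$. Normalising by $2^{\mathfrak p}/3$ and passing from $\mathfrak p$ to $n$ via \eqref{m_b}, and proving $\Gamma\neq0$ by conjugation instead of norms, are inessential variations; your own choices $A_1=8k\log k$, $A_2=0.7$, $A_3=0.7k$, $B=2n$ already give roughly $n<10^{15}k^4(\log k)^3$, so no optimisation of $A_1$ is needed to reach $2.0\cdot 10^{15}$.

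A few slips need fixing, though none affects the conclusion:
\begin{enumerate}[(i)]
\item The norm argument you set aside is immediately contradictory: $3^k|N|=2^{k\mathfrak p}$ with $|N|<1$ forces $2^{\mathfrak p}<3$.
\item The inequalities $1+\log k<2\log k$ and $1+\log 2n<2\log n$ fail for $k=2$ and for $n\le 5$; the factors $2.45$ and $2.55$ work, and the estimate above uses them.
\item Your intermediate bound $h(\gamma_1)<4\log k+3$ does not give $8\log k$ at $k=2$. The bound $3\log k+2.84$ that your height chain actually yields does.
\item The claim $6/2^{\mathfrak p}<c/\alpha^n$ does not follow from $n<2\mathfrak p+3$, which only gives decay like $2^{-n/2}$. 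This is harmless, since you work with $2^{-\mathfrak p}$ throughout.
\end{enumerate}
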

\begin{proof}
	We go back to \eqref{eq:main} and rewrite it using the Binet formula in \eqref{eq:Lnwitherror} as
	\begin{align*}
		L_n^{(k)}-f_k(\alpha)(2\alpha-1)\alpha^{n-1}&=e_k(n),\\
		\dfrac{2^{\mathfrak{p}} + 1}{3}-f_k(\alpha)(2\alpha-1)\alpha^{n-1}&=e_k(n),\\
		\dfrac{2^{\mathfrak{p}}}{3}-f_k(\alpha)(2\alpha-1)\alpha^{n-1}&=e_k(n)-\dfrac{1}{3}.
	\end{align*}	
Taking absolute values, we get
	\begin{align*}
	\left| \dfrac{2^{\mathfrak{p}}}{3}-f_k(\alpha)(2\alpha-1)\alpha^{n-1}\right|&<\dfrac{3}{2}+\dfrac{1}{3}=\dfrac{11}{6},
\end{align*}
and dividing both sides by $f_k(\alpha)(2\alpha-1)\alpha^{n-1}>0$, gives
	\begin{align*}
	\left| 2^{\mathfrak{p}} (3(2\alpha-1)f_k(\alpha))^{-1}\alpha^{-(n-1)}-1\right|&<\dfrac{11}{6f_k(\alpha)(2\alpha-1)\alpha^{n-1}}\\
	&< \dfrac{11}{3}\alpha^{-n}.
\end{align*}
In the above simplification, we have used the fact that $f_k(\alpha)>1/2$ from \eqref{eq2.5} and $\alpha>2(1-2^{-k})\ge 1.5$ for all $k\ge 2$ from \eqref{eq2.3}. Therefore,
	\begin{align}\label{g3}
		|\Gamma_1|:=\left| 2^{\mathfrak{p}}(3(2\alpha-1)f_k(\alpha))^{-1}\alpha^{-(n-1)}-1\right|
		&< \dfrac{11}{3}\alpha^{-n}.
	\end{align}
	Notice that $\Gamma_1\ne 0$, otherwise we would have 
	\begin{align*}
		(2\alpha-1)f_k(\alpha)\alpha^{n-1}= \dfrac{2^{\mathfrak{p}} }{3}.
	\end{align*}	
Taking norms in ${\mathbb K}={\mathbb Q}(\alpha)$ and using $|N(\alpha)|=1$, and item $(ii)$ of Lemma \ref{lemGLm}, the above equation becomes
\begin{align*}
	N\left(f_k(\alpha)\right)\cdot N(2\alpha-1)=\left(\dfrac{2^{\mathfrak{p}}}{3}\right)^{k}.
\end{align*}
This implies that
\begin{align*}
	1\ge \dfrac{(k - 1)^2 }{ 2^{k+1}k^k - (k + 1)^{k+1}}\cdot \left(2^{k+1}-3\right)=\left(\dfrac{2^{\mathfrak{p}}}{3}\right)^{k}\ge \left(\dfrac{2^{3}}{3}\right)^{2}=\dfrac{64}{9},
\end{align*}
which is absurd. 
	
The algebraic number field containing the following $\gamma_i$'s is $\mathbb{K} := \mathbb{Q}(\alpha)$. We have $D = k$, $t :=3$,
\begin{alignat*}{3}
	\gamma_{1}&:=2 ,    \qquad    &\gamma_{2}&:=3(2\alpha-1)f_k(\alpha),         \qquad& \gamma_{3} &:= \alpha,      \\
	b_1         &:=\mathfrak{p},        &\quad b_2    &:= -1,         &\quad b_3    &:=-(n-1).
\end{alignat*}
Notice that $h(\gamma_{1})=h(2)= \log 2 $, $h(\gamma_{3})=(\log \alpha)/k <0.7/k$, so we take $A_{1}:=k\log 2$ and $A_{3}:=0.7$. 
For $A_2$, we first compute 
\begin{align*}
	h(\gamma_{2})&:=h\left(3(2\alpha-1)f_k(\alpha)\right)\\
	&\le h(3)+ 	h\left(2\alpha-1\right)+h\left(f_k(\alpha)\right)\\
	&\le h(3)+ 	h(2) +h\left(\alpha\right)+2\log 2+h\left(f_k(\alpha)\right)\\
	&<\log 3+3\log 2+\frac{\log\alpha}{k}+3\log k\\
	&<9\log k,
\end{align*}
for all $k\ge 2$. In the above computations, we have used inequality \eqref{eq2.9}. So, we can take $A_2:=9k\log k$. Next, $B \geq \max\{|b_i|:i=1,2,3\}$, and by relation \eqref{m_b}, we can take $B:=n+3$.

Now, Theorem \ref{thm:Matl} gives,
	\begin{align}\label{eq3.12}
		\log |\Gamma_1| &> -1.4\cdot 30^{6} \cdot 3^{4.5}\cdot k^2 (1+\log k)(1+\log (n+3))\cdot k\log 2\cdot   0.7\cdot 9k\log k\nonumber\\
		&> -3.0\cdot 10^{12}  k^4(\log k)^2 \log (n+3).
	\end{align}
	Comparing \eqref{g3} and \eqref{eq3.12}, we get
	\begin{align*}
		n\log \alpha-\log(11/3) &< 3.0\cdot 10^{12}  k^4(\log k)^2 \log (n+3)\\
		&< 6.0\cdot 10^{12}  k^4(\log k)^2 \log n.
	\end{align*}
	Therefore, we obtain $n<2.0\cdot 10^{13}  k^4(\log k)^2 \log n$. 
	
	We now apply Lemma \ref{Guz} with  $p:=n$, $r:=1$, $T:=2.0\cdot 10^{13}  k^4(\log k)^2$ and have 
\begin{align*}
	n&<2\cdot 2.0\cdot 10^{13}  k^4(\log k)^2\log\left( 2.0\cdot 10^{13}  k^4(\log k)^2\right)\\
	&=4.0\cdot 10^{13} k^4(\log k)^2 \left(\log (2.0\cdot 10^{13})+4\log k+2\log\log k\right)\\
	&<4.0\cdot 10^{13} k^4(\log k)^3 \left(\dfrac{31}{\log k}+4+\dfrac{2\log\log k}{\log k}\right)\\
	&<2.0\cdot 10^{15} k^4(\log k)^3,
\end{align*}
for all $k\ge 2$. Thus, $n<2.0\cdot 10^{15} k^4(\log k)^3$ and the proof of Lemma \ref{lem3.1} is complete.
\end{proof}

\subsubsection{The case $k>190$}
Let us assume for a moment that $k > 190$.
By Lemma \ref{lem3.1}, we have 
\begin{align*}
	n<2.0\cdot 10^{15} k^4(\log k)^3 < 2^{k/2},
\end{align*}
where the last inequality holds true for all $k>190$. Since $n<2^{k/2}$, this puts us in position to use inequality \eqref{lk_b1}, which together with \eqref{eq2.6} gives
\begin{align*}
	\left|\dfrac{2^{\mathfrak{p}} + 1}{3}-3\cdot2^{n-2}\right| &=\left|L_n^{(k)} - 3\cdot2^{n-2}\right|\\
	&\le \left|L_n^{(k)} - f_k(\alpha)(2\alpha - 1)\alpha^{n - 1}\right|+\left|f_k(\alpha)(2\alpha - 1)\alpha^{n - 1} - 3\cdot2^{n-2}\right|\\
	&< \dfrac{3}{2} + 3\cdot 2^{n-2}\cdot \frac{36}{2^{k/2}}.
\end{align*}
Therefore,
\begin{align*}
	\left|\dfrac{2^{\mathfrak{p}}}{3}-3\cdot2^{n-2}\right| 
	&< \dfrac{3}{2} + 3\cdot 2^{n-2}\cdot \frac{36}{2^{k/2}}+\dfrac{1}{3}\\
	&= \dfrac{11}{6} + 3\cdot 2^{n-2}\cdot \frac{36}{2^{k/2}}\\
	&< 2 +  2^{n}\cdot \frac{27}{2^{k/2}}\\
&< 2^{n+1}\cdot \frac{27}{2^{k/2}},
\end{align*}
since we are in the case $n>k  >k/2$. Multiplying through by 3 and dividing through by $2^{n-2}$, we get
\begin{align}\label{2p}
	\left|9-2^{\mathfrak{p}-n+2}\right|<  \frac{648}{2^{k/2}}.
\end{align}
Notice that since $n>k$, we can rewrite \eqref{eq:main} using \eqref{eq:32} as 
\begin{align*}
	 \frac{2^{\mathfrak{p}}}{3}<\frac{2^{\mathfrak{p}} + 1}{3}=L_n^{(k)} <3\cdot 2^{n-2}.
\end{align*}
This gives $2^{\mathfrak{p}}<9\cdot 2^{n-2}$ and taking logarithms both sides simplifies it to $\mathfrak{p}<n+2$. Therefore, $\mathfrak{p}-n+2<4$. Note also that $\mathfrak{p}-n+2$ is an integer, so $\mathfrak{p}-n+2 \le 3$ and hence $1\le \left|9-2^{\mathfrak{p}-n+2}\right| < 9 $. Comparing this inequality with \eqref{2p}, we get
\begin{align*}
	1\le \left|9-2^{\mathfrak{p}-n+2}\right|<  \frac{648}{2^{k/2}},
\end{align*}
from which we deduce that $2^{k/2}<648$. This gives $k<19$, contradicting the assumption that $k>190$. Hence, Equation \eqref{eq:main} has no solutions whenever $k>190$.

To proceed, we first recall one additional simple fact from calculus. If $x\in \mathbb{R}$ satisfies $|x|<1/2$, then 
\begin{align}\label{eq2.5g}
	|\log(1+x)|&<|x-x^2/2+-\dots|
	<|x|+\frac{|x|^2+|x|^3+\dots}2
	<|x|\left(1+\frac{|x|}{2(1-|x|)}\right)<\frac 32 |x|.
\end{align}
 We use this inequality in the next subsection.

\subsubsection{The case $k\le 190$}
If $k \leq 190$, it follows from \eqref{lem3.1} that  
\[
n < 2.0 \cdot 10^{15} k^4 (\log k)^3 < 1.0 \cdot 10^{29}.
\]
Our goal here is to derive a smaller upper bound on $n$ for further effective computation. To accomplish this, we revisit \eqref{g3} and write
\[
\Gamma_1 := 2^{\mathfrak{p}}(3(2\alpha-1)f_k(\alpha))^{-1}\alpha^{-(n-1)}-1 = e^{\Lambda_1} - 1.
\]
Since we established that $\Gamma_1 \neq 0$, it follows that $\Lambda_1 \neq 0$. Now, assuming $\ell \geq 2$, we obtain the inequality  
\[
\left| e^{\Lambda_1} - 1 \right| = |\Gamma_1| < 0.5,
\]
which leads to $ |\log (1+\Gamma_1)| < 1.5|\Gamma_1|$, via \eqref{eq2.5g}. Consequently, we derive  
\[
\left| (n - 1) \log \alpha - \mathfrak{p} \log 2 + \log \left( 3(2\alpha-1)f_k(\alpha) \right) \right| < \frac{11}{2}\alpha^{-n}.
\]
Now, we apply the LLL-algorithm for each $k \in [2,190]$ to obtain a lower bound for the smallest nonzero value of the above linear form, constrained by integer coefficients with absolute values not exceeding $n+3 < 1.1 \cdot 10^{29}$. Specifically, we consider the lattice  
\[
\mathcal{A} = \begin{pmatrix} 
	1 & 0 & 0 \\ 
	0 & 1 & 0 \\ 
	\lfloor C\log \alpha\rfloor & \lfloor C\log (1/2)\rfloor & \lfloor C\log \left(3(2\alpha-1)f_k(\alpha)\right) \rfloor
\end{pmatrix},
\]
where we set $C := 4.0\cdot 10^{87}$ and $y := (0,0,0)$. Applying Lemma \ref{lem2.5m}, we obtain 
\[
c_1 = 10^{-67} \quad \text{and} \quad \delta = 2.83\cdot 10^{63}. 
\]
Using Lemma \ref{lem2.6m}, we obtain $S =2.42 \cdot 10^{58}$ and $T = 1.65 \cdot 10^{29}$. Since $\delta^2 \geq T^2 + S$, then choosing $c_3 := 11/2$ and $c_4 := \log \alpha$, we establish the bound $n \leq 141$. 

To conclude this case, we look for solutions to \eqref{eq:main} in the reduced ranges $k\in [2, 190]$, $n \in [k + 1, 141]$ and $\mathfrak{p} \in [3, n+3]$. A computational search in SageMath 10.6 reveals only two integral solutions to \eqref{eq:main} which are stated in Theorem \ref{thm1.1}.  \qed

\section*{Acknowledgments} 
The author thanks the Mathematics division of Stellenbosch University for funding his PhD studies.

\section*{Addresses}

$ ^{1} $ Mathematics Division, Stellenbosch University, Stellenbosch, South Africa.

Email: \url{hbatte91@gmail.com}
\end{document}